\newtheorem{thm}{Theorem}[section]
\newtheorem{theorem}{Theorem}[section]
\newtheorem{prop}[thm]{Proposition}
\newtheorem{rmk}[thm]{Remark}
\def\R{\mathbb{R}}
\def\ep{\epsilon}
\def\C{\mathbb{C}}
\newtheorem{question}{Question}
\newtheorem{remark}[theorem]{Remark}
\newtheorem*{theorem-non}{Theorem}
\def\ep{\epsilon}
\def\R{\mathbb{R}}
\def\N{\mathbb{N}}
\def\C{\mathbb{C}}
\def\cal R{\mathcal R}
\newcommand{\jznote}[1]{#1}
\newcommand{\jgnote}[1]{#1}
\newcommand{\djnote}[1]{#1}
\newcommand{\zjnote}[1]{#1}
\newcommand{\zjrnote}[1]{#1}
\title{Coarse distance from \zjnote{dynamically convex to convex}}
\author{Julien Dardennes}
\email{julien.dardennes@math.univ-toulouse.fr}
\address{Institute of Mathematics, University of Toulouse, 118 route de Narbonne, 31062 Toulouse Cedex 9, France}
\author{Jean Gutt}
\email{jean.gutt@math.univ-toulouse.fr}
\address{Institute of Mathematics, University of Toulouse,  118 route de Narbonne, 31062 Toulouse Cedex 9, France and INU Champollion, Place de Verdun, 81000 Albi, France}
\author{Vinicius G. B. Ramos}
\email{vgbramos@impa.br}
\address{Instituto de Matem\'atica Pura e Aplicada, Estrada Dona Castorina, 110, Rio de Janeiro - RJ -Brasil, 22460-320}
\author{Jun Zhang}
\email{jzhang4518@ustc.edu.cn}
\address{The Institute of Geometry and Physics, University of Science and Technology of China, 96 Jinzhai Road, Hefei Anhui, 230026, China}
\begin{document}

\maketitle

\begin{abstract}

Chaidez and Edtmair have recently found the first examples of dynamically convex domains in $\R^4$ that are not symplectomorphic to convex domains \zjnote{(called symplectically convex domains)}, answering a long-standing open question. \djnote{In this paper, we \zjnote{discover} new examples of such domains without referring to Chaidez-Edtmair's criterion in \cite{CD-20}. We also show that these domains are arbitrarily far from the set of \zjnote{symplectically} convex domains in $\R^4$ with respect to the \zjnote{coarse symplectic Banach-Mazur distance} by using an explicit numerical criterion for symplectic non-convexity.
}

\end{abstract}


\section{Introduction and main results}  \label{sec-motivation} \subsection{Introduction} 
Convex domains in $\R^{2n}$ have very strong symplectic rigidity properties, \jznote{including the existence of closed orbits on its boundary in many cases \cite{Rab78,HWZ}, the celebrated strong Viterbo conjecture that claims the equality of all the normalized symplectic capacities, as well as its interesting consequences on the systolic inequality between symplectic capacity and volume \cite{Viterbo2000} and its implication to Mahler conjecture in convex geometry \cite{AA-K-O-2014}. However,} geometric convexity is not preserved by symplectic transformations. 

The main motivation for this paper is the desire to understand {\bf the symplectic analogue of convexity}.
Hofer, Wysocki and Zehnder observed that the characteristic flow on the boundary of a convex set satisfies a certain index condition \cite{HWZ}, that they called {\em dynamical convexity}, which is preserved by symplectomorphisms. For a long time, it was speculated that if a domain is dynamically convex, then it is symplectomorphic to a convex domain. \jznote{Chaidez and Edtmair have shown \cite{CD-20} that there exist dynamically convex domains that are not symplectomorphic to convex ones} \djnote{in dimension 4}. 
\djnote{A recent work in \cite{DGZ} discovers more such examples which have been then easily applied to Chaidez and Edtmair's generalization of their criterion in higher dimensions \cite{CD-higher}.}

\medskip

In this paper we answer the following \djnote{quantitative} question in dimension 4.
\begin{question} \label{question-1}
	\jznote{How far can dynamically convex \jznote{domains} be away from convex domains?}
\end{question}


%

Let $\mathcal C_{4}$ be the set of domains in $\R^{4}$ which are \zjnote{symplectomorphic to convex domains, called symplectically convex,} and let \jznote{$\mathcal{D}_4$} denote the set of dynamically convex domains as \jznote{first introduced in \cite{HWZ}}.
It follows from \djnote{the same paper} that $\mathcal{C}_4\subset\mathcal{D}_4$.

A non-linear symplectic analogue of the Banach-Mazur distance (\ref{dfn-bm})  was \jznote{first suggested by Ostrover and Polterovich and further developed by \cite{Usher-sbm,SZ}. It is defined as follows.} 
For $U,V\subset\R^4$, let
\begin{equation} \label{dfn-dc} 
d_c(U, V) = \inf\left\{\log \lambda \geq 0 \, \bigg| \, \frac{1}{\lambda} U \hookrightarrow V \hookrightarrow \lambda U\right\}
\end{equation}
where $\hookrightarrow$ represents a symplectic embedding via some symplectomorphism $\phi$ of \zjnote{$\R^{4}$. According to \cite[Definition 1.3]{Usher-sbm} where $d_c$ is named as the coarse symplectic Banach-Mazur distance, we briefly call $d_c$ the {\it coarse distance} in this paper}. Following what is usually done in any metric space, given $\mathcal{A},\mathcal{B}$ two collections of subsets of $\R^4$, we define the distance from $\mathcal{A}$ to $\mathcal{B}$ as: 
\jznote{
\begin{equation} \label{sigma}
\sigma_{d_c}(\mathcal{A},\mathcal{B}) = \sup_{U \in \mathcal{A}} \,\inf_{V \in \mathcal{B}} \,d_c(U,V).
\end{equation}}
Note that \jznote{$\sigma_{d_c}(-, -)$} is not symmetric in general. For instance, if $\mathcal{A} \subset \mathcal{B}$, then \jznote{$\sigma_{d_c}(\mathcal{A},\mathcal{B}) = 0$} while \jznote{$\sigma_{d_c}(\mathcal{B},\mathcal{A})$} could be large. Moreover, \jznote{$\sigma_{d_c}(-,-)$} satisfies a monotonicity property, namely, \jznote{$\sigma_{d_c}(\mathcal{A}, \mathcal{B}') \leq \sigma_{d_c}(\mathcal{A}, \mathcal{B})$} if $\mathcal{B} \subset \mathcal{B}'$. \jznote{In what follows, for simplicity, let us denote $\sigma_{d_c}$ by $\sigma$. }

\medskip

The main result of this paper is \jznote{to answer Question \ref{question-1} under the distance $d_c$}. It turns out that \jznote{the $d_c$-distance} from the set of dynamically convex domains to the set of \zjnote{symplectically} convex domain\jznote{s} is unbounded.
\begin{thm} \label{prop-monotone-convex} $\sigma(\mathcal D_4, \mathcal C_4) = +\infty$. \end{thm}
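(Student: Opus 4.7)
My plan is to extract from the ``explicit numerical criterion for symplectic non-convexity'' advertised in the abstract a single functional $\rho \colon \mathcal D_4 \to [0,+\infty)$ with three properties: (a) $\rho$ is bounded above by a universal constant $C$ on $\mathcal C_4$; (b) $\rho$ is quasi-monotone under the coarse distance, i.e. $d_c(U,V)\leq \log\lambda$ forces $\rho(V)\geq \lambda^{-4}\rho(U)$; and (c) $\rho$ takes arbitrarily large values on $\mathcal D_4$. Combining (a) and (b) yields, for any $U\in \mathcal D_4$ and any $V\in \mathcal C_4$, the lower bound
\[
d_c(U,V)\;\geq\;\tfrac{1}{4} \log\!\bigl(\rho(U)/C\bigr),
\]
and then (c) makes $\inf_{V\in \mathcal C_4} d_c(U,V)$ unbounded as $U$ varies in $\mathcal D_4$, which is exactly the theorem.

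For $\rho$ I would take a ratio of two symplectic capacities that collapses on convex domains; the cleanest candidate is $\rho(U) = c_{GH}^k(U)/c_{GH}^1(U)$, where $c_{GH}^k$ is the $k$-th Gutt--Hutchings capacity. Property (a) then follows with $C=k$ from the iteration inequality $c_{GH}^k(V)\leq k\, c_{GH}^1(V)$ for $V\in \mathcal C_4$, which is obtained by testing $c_{GH}^k$ on the $k$-fold iterate of an action-minimizing closed Reeb orbit — an iterate whose Conley--Zehnder index sits in the correct window precisely because convexity of $V$ implies dynamical convexity. Property (b) is automatic: each $c_{GH}^j$ is monotone under symplectic embeddings and conformal of degree $2$, so the chain $\tfrac{1}{\lambda}U\hookrightarrow V\hookrightarrow \lambda U$ forces $\lambda^{-2}c_{GH}^j(U)\leq c_{GH}^j(V)\leq \lambda^2 c_{GH}^j(U)$, and combining these for $j=1$ in the denominator and $j=k$ in the numerator produces the factor $\lambda^4$.

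Property (c) is where the real work lies, and where I would invoke (or mildly adapt) the family of dynamically convex domains constructed in \cite{DGZ}. Those domains arise from smoothing certain toric-type regions in $\R^4$, and the Gutt--Hutchings capacities of the resulting domains are governed by explicit combinatorial data of the moment region, while dynamical convexity is encoded by a slope condition on its boundary. With the right parameters one keeps $c_{GH}^1$ bounded while $c_{GH}^k$ grows without bound for some fixed $k$, driving $\rho$ to infinity. The main obstacle of the proof is precisely this simultaneous control: one has to certify dynamical convexity — a condition on the Conley--Zehnder indices of all short Reeb orbits after smoothing — in a parameter regime deliberately engineered to blow up $\rho$, and to do so without appealing to the Chaidez--Edtmair criterion. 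Once this is in place, properties (a) and (b) are formal and the conclusion $\sigma(\mathcal D_4, \mathcal C_4)=+\infty$ is immediate.
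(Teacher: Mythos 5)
Your overall architecture — find a functional that is (a) uniformly bounded on $\mathcal C_4$, (b) quasi-monotone under $d_c$, and (c) unbounded on $\mathcal D_4$ — is exactly the skeleton of the paper's proof, and your step (b) is correct as stated. The fatal gap is step (c) for the specific functional you chose. For a fixed index $k$, the ratio $\rho=c_{GH}^k/c_{GH}^1$ does not blow up on any of the known examples, including the ones you propose to use. On convex toric domains the Gutt--Hutchings formula gives $c_{GH}^k\leq k\,c_{GH}^1$ outright, and on concave toric domains one gets $c_{GH}^k\leq (k+1)\,c_{GH}^1$: writing $[v]_\Omega=\min\{\langle v,w\rangle: w\in\partial_+\Omega\}$ and letting $(d,d)$ be the point where $\partial_+\Omega$ meets the diagonal, every competitor satisfies $[v]_\Omega\leq (k+1)d$ while $c_{GH}^1=\min_{\partial_+\Omega}(w_1+w_2)\geq d$. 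In particular, for the paper's family $X_{\Omega_p}$ a direct computation with the concave formula gives $c_{GH}^k/c_{GH}^1\to\sqrt{\lfloor\frac{k+1}{2}\rfloor\lceil\frac{k+1}{2}\rceil}\leq\frac{k+1}{2}$ as $p\to 0$, i.e.\ these domains \emph{pass} your convexity test with room to spare, and the toric examples of \cite{DGZ} are of the same type. So no example certifying (c) is known, and there is reason to suspect none exists; the ``real work'' you defer is not merely hard but is pointed at a functional that appears to be blind to the phenomenon. (Your step (a) is also not off-the-shelf: $c_{GH}^k(V)\leq k\,c_{GH}^1(V)$ for convex $V$ requires showing that the $k$-fold iterate of an action-minimizing orbit actually realizes the min-max value defining $c_{GH}^k$, which is a genuine argument, not a formality — though this is secondary.)

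The paper escapes this trap by not using a fixed capacity index. Its functional is $U\mapsto d_c(U,\mathcal E_4)$, the coarse distance to the set of symplectic ellipsoids: this is bounded by $\log 2$ on $\mathcal C_4$ via a symplectic John's ellipsoid theorem (Proposition \ref{prop-symp-john}), which plays the role of your (a). Unboundedness on $\mathcal D_4$ is then proved on the family $X_{\Omega_p}$ by testing against each candidate ellipsoid $E(a,b)$ with the ECH capacity $c_k^{\rm ECH}$ for $k=\lfloor b/a\rfloor$ \emph{adapted to the eccentricity of that ellipsoid}, and playing $c_k^{\rm ECH}(E(a,b))=ka$ against the asymptotic bound $c_k^{\rm ECH}\lesssim 2\sqrt{k\cdot{\rm vol}}$ on a truncation of $\Omega_p$, the Gromov width, and the volume. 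The mechanism is the tension between first capacity, high ECH capacities, and volume — a three-way comparison with a moving index — rather than a single fixed ratio of capacities. If you want to keep your cleaner ``single functional'' formulation, the functional has to be something like $\inf_E d_c(\cdot,E)$ or $\sup_k c_k^{\rm ECH}(\cdot)^2/(k\cdot{\rm vol}(\cdot))$, not a fixed Gutt--Hutchings ratio.
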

\jznote{\noindent Along with the proof of Theorem \ref{prop-monotone-convex}, we discover a family of dynamically convex domains $X_{\Omega_p}$ (see (\ref{Xp})), parametrized by $p \in (0,1]$, that are not \jgnote{symplectomorphic} to convex ones when $p$ is sufficiently small. 

\medskip

\noindent {\bf Novelty:} The novelty of the toric domain $X_{\Omega_p}$ in the proof of Theorem \ref{prop-monotone-convex} consists of the following four points:}

\jznote{(i)  It is the first family of dynamically convex domains \jgnote{whose} symplectic non-convexity can be verified {\it without} referring to Chaidez-Edtmair's criterion.}

\jznote{(ii) The verification is only based on the classical machinery - ECH capacities (see \cite{Hut11}), where a concrete estimation on how small $p$ can be so that $X_{\Omega_p}$ is not \zjnote{symplectically convex} is obtained (see Remark \ref{numerical}). Note that an estimation on $p$ could also be obtained via Chaidez-Edtmair's criterion (See Remark \ref{Edt-rmk}). }

\zjnote{(iii) It shows a successful metric-geometrical application, based on a symplectic version of the classical John's ellipsoid theorem (see Section \ref{sec-main}) that quantitatively characterizes the property of being symplectomorphic to convex domains. In fact, our second main result Theorem \ref{thm-large-ratio} illustrates that this metric geometry approach is ``independent'' of Chaidez-Edtmair's criterion.} 

(iv) It indicates a new direction going to infinity in the large-scale geometry of the pseudo-metric space that consists of all the star-shaped domains in $\R^4$ equipped with $d_c$ (see Remark \ref{rmk-large-scale}). 

\zjrnote{\begin{remark} [About the point (ii) above] \label{Edt-rmk} Soon after the first version of this paper appears to the public, Oliver Edtmair informed us that, by chasing the arguments in \cite{CD-20}, one was able to obtain an explicit estimate of $C$ in the criterion (\ref{CE-ct}), though not presented in \cite{CD-20}. As a consequence, based on a careful calculation of $c_{\rm CE}(X_{\Omega_p})$, this upper bound also helps to estimate, in a relatively precisely manner, the parameter $p$ in $X_{\Omega_p}$ for symplectic non-convexity. 
\end{remark}}

\subsection{\jznote{$d_c$-large toric domains}} \label{ssec-td} 

\jznote{To state our first main result, towards the proof of Theorem \ref{prop-monotone-convex}, let us recall some notations from the symplectic toric geometry}. Let $\mu:\R^{2n}\to[0,\infty)$ be the standard moment map, i.e., 
\[\mu(z_1,\dots,z_n)=(\pi|z_1|^2,\dots,\pi|z_n|^2).\] A toric domain is a set of the form $X_\Omega=\mu^{-1}(\Omega)$, where $\Omega$ is the closure of a non-empty open set. We say that $X_\Omega$ is star-shaped if the origin is in the interior of $X_\Omega$ and $r \cdot X_\Omega\subset X_\Omega$ for $0<r<1$. Given a star-shaped toric domain $X_\Omega$, let $\partial_+\Omega$ denote the closure of $\partial\Omega\cap(0,\infty)^n$. In dimension 4, $\partial_+\Omega$ is a curve that connects the $x$ and the $y$ axes. We say that a star-shaped toric domain is \jznote{monotone} if $\partial_+\Omega$ is the graph of a decreasing function. \jznote{Let} $\mathcal{T}_4$ and $\mathcal{M}_4$ denote the sets consisting of the star-shaped toric domains and of the monotone toric domains, respectively. It was shown in \cite[Theorem 1.7]{HGR} that for every $U \in \mathcal M_4$, all the normalized symplectic capacities of $U$ coincide. The set $\mathcal M_4$ is quite a large set including all convex and concave toric domains, see \cite[Definition 2.1, 2.2]{HGR}, for example. In \cite[Proposition 1.8]{HGR}, it was shown that $\mathcal M_4=\mathcal T_4\cap \mathcal D_4$.

The following Figure \ref{fig_rel} illustrates the schematic relations between $\mathcal T_4$, $\mathcal C_4$, $\mathcal M_4$, and $\mathcal D_4$, \jznote{where $\mathcal M_4$ is the shaded region. }
\begin{figure}[h]
  \includegraphics[scale=0.65]{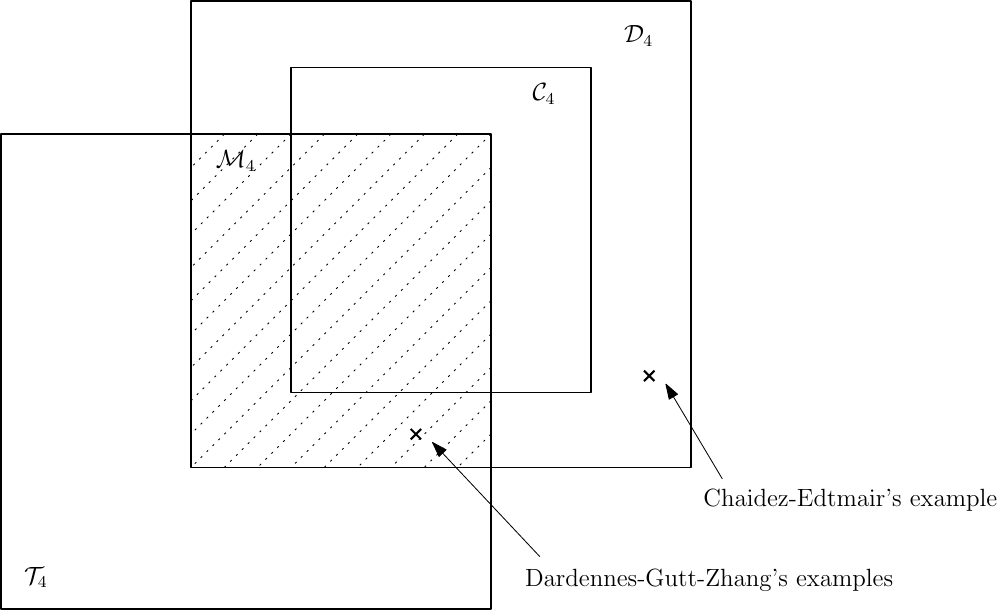}
  \caption{Relations between $\mathcal T_4$, $\mathcal C_4$, $\mathcal M_4$, and $\mathcal D_4$.}
  \label{fig_rel}
\end{figure}

As mentioned above, the relation $\mathcal C_4 \subset \mathcal D_4$ is given by \cite{HWZ}. It is strict due to examples that were recently found by \cite{CD-20} and \cite{DGZ}. So for the proof of Theorem \ref{prop-monotone-convex}, it suffices to find a family of monotone toric domains that are arbitrary far from all elements of $\mathcal C_4$. 

\jznote{Now consider the following toric domain denoted by $X_{\Omega_p}\in \mathcal{M}_4$, where Figure \ref{fig_p_ball_0} shows the moment image $\Omega_p$\zjnote{.}
\begin{figure}[h]
\includegraphics[scale=0.9]{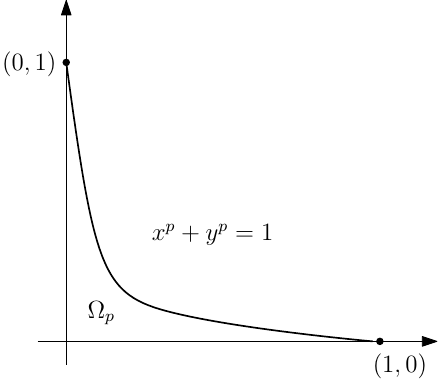}
\centering
\caption{$X_{\Omega_p}$ for $p \in (0,1]$.} \label{fig_p_ball_0}
\end{figure}} \jznote{Explicitly, for $p\in(0,1]$, let 
\begin{equation} \label{Xp}
X_{\Omega_p}:=\left\{(z_1,z_2)\in\C^2\mid \left(\pi|z_1|^2\right)^p+\left(\pi|z_2|^2\right)^p<1\right\}.
\end{equation}
}
\djnote{Define $d_c(X_{\Omega_p},\mathcal{E}_4)=\inf_{E \in \mathcal{E}_4} \,d_c(X_{\Omega_p},E)$, where $\mathcal{E}_4$ \zjnote{consists of} symplectic ellipsoids in $\R^4$ \zjnote{defined by (\ref{ellispoid})}}. Here is a main result.
\begin{thm} \label{ECH-est} For toric domain $X_{\Omega_p}$ defined as above, we have the following estimation for $p<\frac{1}{5}$,
\begin{equation}\label{eq:ineq}
d_c(X_{\Omega_p},\mathcal{E}_4)\ge \frac{1}{8} \log \left( \frac{g(p)}{1+\log 4+\log g(p)}\right)
\end{equation}
 where $g(p)=2^{\frac{2}{p}-2}{\rm{Vol}}_{\rm \R^4}(X_{\Omega_p})$. 
 In particular, when $p$ satisfies the condition that 
\begin{equation} \label{estimate}
\frac{g(p)}{1+ \log 4 + \log g(p)} \geq 256, 
\end{equation}
then $X_{\Omega_p}$ is dynamically convex but not symplectically convex. \end{thm}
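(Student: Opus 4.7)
The plan is to establish the lower bound \eqref{eq:ineq} via ECH capacities and then invoke the symplectic John's ellipsoid theorem of Section \ref{sec-main} to deduce symplectic non-convexity under \eqref{estimate}. As a preliminary, $\partial_+\Omega_p$ is the graph of the decreasing function $x\mapsto(1-x^p)^{1/p}$, so $X_{\Omega_p}\in\mathcal{M}_4$, and the identification $\mathcal{M}_4=\mathcal{T}_4\cap\mathcal{D}_4$ recalled above gives that $X_{\Omega_p}$ is dynamically convex for every $p\in(0,1]$. For $p<1$ the curve $\partial_+\Omega_p$ lies strictly below the segment from $(1,0)$ to $(0,1)$, so $X_{\Omega_p}$ is moreover a concave toric domain in the sense of Hutchings, and its ECH capacities admit a weight-expansion description.

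For \eqref{eq:ineq}, I would use conformality and monotonicity of the ECH capacities $\{c_k^{\rm ECH}\}$: for any ellipsoid $E$, if $\lambda^{-1}X_{\Omega_p}\hookrightarrow E\hookrightarrow \lambda X_{\Omega_p}$ then $\lambda^{-2}c_k^{\rm ECH}(X_{\Omega_p})\le c_k^{\rm ECH}(E)\le\lambda^{2}c_k^{\rm ECH}(X_{\Omega_p})$ for every $k$, so $d_c(X_{\Omega_p},E)\ge\tfrac{1}{2}\sup_k|\log(c_k^{\rm ECH}(X_{\Omega_p})/c_k^{\rm ECH}(E))|$. The largest inscribed moment triangle $\{x+y\le a_1\}\subset\Omega_p$ is tangent to $\partial_+\Omega_p$ at $(a_1/2,a_1/2)$, which yields $a_1=2^{1-1/p}$ and identifies $g(p)={\rm Vol}_{\R^4}(X_{\Omega_p})/a_1^2$ as the natural volume ratio. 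The key step is then to combine the Weyl-type asymptotic $c_k^{\rm ECH}(X_{\Omega_p})^2/k\to 4\,{\rm Vol}_{\R^4}(X_{\Omega_p})$ with the description of $\{c_k^{\rm ECH}(E(a,b))\}$ as the sorted list of lattice values $\{ma+nb\}_{m,n\ge 0}$, and to choose a test index $k_\star$ of order $g(p)$ that works uniformly over $E=E(a,b)$: case analysis on whether $k_\star$ lies in the ``linear regime'' $k\le\lfloor b/a\rfloor$ (where $c_k^{\rm ECH}(E)=ka$) or beyond forces at least one of $c_{k_\star}^{\rm ECH}(X_{\Omega_p})/c_{k_\star}^{\rm ECH}(E)$ or its reciprocal to be large. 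The logarithmic correction $1+\log 4+\log g(p)$ in \eqref{eq:ineq} parametrizes the crossover location of $k_\star$, and the exponent $1/8$ collects the $\lambda^2$ from ECH conformality with the squaring needed to convert capacities into volume.

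For symplectic non-convexity, the symplectic John's ellipsoid theorem of Section \ref{sec-main} asserts that $d_c(U,\mathcal{E}_4)\le\log 2$ for every $U\in\mathcal{C}_4$. Under \eqref{estimate}, $\tfrac{1}{8}\log(g(p)/(1+\log 4+\log g(p)))\ge\tfrac{1}{8}\log 256=\log 2$, so \eqref{eq:ineq} forces $d_c(X_{\Omega_p},\mathcal{E}_4)\ge\log 2$, ruling out $X_{\Omega_p}\in\mathcal{C}_4$; together with the dynamical convexity established in the preliminary step, this finishes the proof. I expect the main obstacle to be the uniform-in-$E$ ECH estimate of the middle paragraph: the Weyl asymptotic alone is too blunt, and one must finely control the weight expansion of $X_{\Omega_p}$ against the two-parameter family of lattice sequences $\{ma+nb\}$, optimizing $k_\star$ so that the resulting bound is independent of $E$ and tracks the explicit constant $2^{2/p-2}$ that appears through $a_1$.
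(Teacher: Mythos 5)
Your overall architecture matches the paper's: ECH capacities for the lower bound \eqref{eq:ineq}, and Proposition \ref{prop-symp-john} plus $\mathcal M_4=\mathcal T_4\cap\mathcal D_4$ for the second assertion. The preliminary step (dynamical convexity, concavity of $\Omega_p$) and the final deduction of non-convexity from \eqref{eq:ineq} and \eqref{estimate} are correct and essentially identical to the paper. The identification $g(p)={\rm Vol}(X_{\Omega_p})/a_1^2$ with $a_1=c_1^{\rm ECH}(X_{\Omega_p})=2^{1-1/p}$ is also a correct and useful observation.

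However, the core of the theorem --- the uniform-in-$E(a,b)$ estimate --- is exactly the part you leave as a plan and explicitly flag as the ``main obstacle,'' so there is a genuine gap. Moreover, the tool you propose for it, the Weyl asymptotic $c_k^{\rm ECH}(X_{\Omega_p})^2/k\to 4\,{\rm vol}(X_{\Omega_p})$, points in the wrong direction: it has no rate (so it cannot be evaluated at a single test index $k_\star\sim g(p)$), and more importantly the argument needs $c_k^{\rm ECH}(X_{\Omega_p})$ to be much \emph{smaller} than $2\sqrt{k\,{\rm vol}(X_{\Omega_p})}$ in the relevant range of $k$, not asymptotically equal to it. The paper's key step, which is absent from your sketch, is: take $k=\lfloor b/a\rfloor$ determined by the candidate ellipsoid, note that the first $k$ balls of the weight decomposition of the concave domain $\Omega_p$ sit inside the truncation $X_k(p)=\{x^p+y^p\le 1,\ \max(x,y)\le x_k(p)\}$, and bound $c_k(\bigsqcup_i B(w_i))\le 2\sqrt{k\cdot{\rm vol}(X_k(p))}$ with ${\rm vol}(X_k(p))\le 2^{-2/p}(1+\log 4+\log k)$ --- only logarithmic in $k$ because for small $p$ the curve hugs the axes. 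This forces $a\le 2\cdot 2^{-1/p}\sqrt{(1+\log 4+\log k)/k}$, which combined with $c_1(X_{\Omega_p})\le\lambda^2 a$ gives $\lambda^4\ge k/(1+\log 4+\log k)$, and combined with the volume constraint ${\rm vol}(X_{\Omega_p})\le{\rm vol}(\lambda E(a,b))$ gives $\lambda^4\ge g(p)/(1+\log 4+\log k)$. Since the first bound increases and the second decreases in $k$, minimizing their maximum over $k$ yields the $k$-free bound $\lambda^4\ge g(p)/(1+\log 4+\log g(p))$, whence the exponent $1/8$ via $d_c=\tfrac12\log f(p)$. Without this truncation estimate and the two-bound interpolation, your case analysis on the ``linear regime'' of $E(a,b)$ cannot be closed.
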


\jznote{The proof of the second conclusion of Theorem \ref{ECH-est} directly comes from Proposition \ref{prop-symp-john}, while the first conclusion needs a sophisticated argument based on \zjnote{ECH capacities}, given in Section \ref{prop-monotone-convex}.} 
\begin{rmk}\label{numerical} \zjrnote{From the second conclusion of Theorem \ref{ECH-est}, one can estimate $p \in (0,1]$ so that $X_{\Omega_p}$ is symplectically non-convex.} \djnote{\zjnote{Explicitly, from the inductive relation that $g(\frac{1}{k+1})=\frac{2k+2}{2k+1}g(\frac{1}{k})$ for any positive integer $k$, one can show that for \zjnote{$p\le \frac{1}{62460059}$}, the condition (\ref{estimate}) is satisfied and such $X_{\Omega_p}$ is not symplectically convex.} \zjrnote{Here, we emphasize that little effort was made in the proof of Theorem \ref{ECH-est} to improve the estimation (\ref{estimate}), where the estimation from Chaidez-Edtmair's criterion (\ref{CE-ct}) mentioned in Remark \ref{Edt-rmk} potentially results in a better estimation on $p$.}  } 
\end{rmk}

\begin{rmk} For two domains $U, V \subset \R^4$, we call them {\rm symplectically equivalent} if $d_c(U, V) = 0$. Note that if $U$ is symplectomorphic to $V$, then they are symplectically equivalent by the definition of $d_c$. Formally, the symplectic equivalence relation is weaker (that is, more general) than being symplectomorphic. Back to $X_{\Omega_p}$, even though the criterion (\ref{CE-ct}) applies to verify its symplectic non-convexity, our Theorem \ref{ECH-est} in fact shows that they are not even symplectically equivalent to any convex domains when $p$ is sufficiently small. \end{rmk}

\jznote{\begin{proof}[Proof of Theorem \ref{prop-monotone-convex}] (Assuming Theorem \ref{ECH-est}) 

\djnote{In proof of Theorem \ref{ECH-est} in Section \ref{sec-main}, we show that $g(p)$ increases to $+\infty$ when $p$ goes to 0 so do the lower bound in (\ref{eq:ineq}). \zjnote{Then t}he desired conclusion results from the following triangular inequality,
$$d_c(X_{\Omega_p},\mathcal{E}_4)\le d_c(X_{\Omega_p},\mathcal{C}_4)+\sigma(\mathcal{C}_4,\mathcal{E}_4)$$
and Proposition \ref{prop-symp-john}.}
\end{proof}}

\begin{rmk} \label{rmk-large-scale} Consider the following pseudo-metric space
\[ \left(\mathcal S_4, d_c\right) = \left(\left\{\mbox{star-shaped domains in $\R^4$}\right\}, d_c\right). \]
From a perspective of the coarse geometry that focuses on large-scale geometrical phenomena, one could ask how many linearly independent directions in $(\mathcal S_4, d_c)$ that go to infinity, usually called the {\rm rank of a quasi-flat} in $(\mathcal S_4, d_c)$. The higher the rank is or the more directions there are, the richer star-shaped domains in $\R^4$ will be in terms of $d_c$. Immediately from the relation $\mathcal E_4 \subset \mathcal S_4$ together with Proposition \ref{prop-symp-john}, one concludes that there are at least 2 such directions, given by $\mathcal C_4$ consisting of all the symplectically convex domains. 

Moreover, the classical example, 1-finger shape from Hermann in \cite{Her}, shows that there exists a third direction. Here, by Theorem \ref{ECH-est} and a quick observation via the comparison of Lagrangian capacity $c_{\rm Lag}$ (see, for instance, \cite{CM18,Per22}) of $X_{\Omega_p}$ and Hermann's example, the family $X_{\Omega_p}$ for $p \in (0,1]$ indicates the existence of a new direction going to infinity in $(\mathcal S_4, d_c)$. 
\end{rmk}



\subsection{\zjnote{Differences in two criteria}} \label{ssec-comp}

%
%
%

As mentioned above, before this paper, the only technique that was \jgnote{known} for proving that dynamically convex domains are not \zjnote{symplectically convex was Chaidez--Edtmair's criterion} based on the Ruelle invariant \cite{hut_ruelle}. It turns out that this criterion and the \zjnote{metrical-geometric approach} used in this paper are quite different in essence as we now explain. To state the next result, let us introduce a notation: for any \djnote{star-shaped} domain $X\subset \R^4$ (where $\partial X$ is a contact manifold with the contact 1-form given by $\lambda = \lambda_{\rm std}|_{\partial X}$), define 
\begin{equation} \label{dfn-CE}
c_{\rm CE}(X) : =  \frac{{\rm Ru}(\partial X) \cdot \mbox{(min period of a Reeb orbit of $\partial X$)}}{{\rm Vol}(\partial X)},
\end{equation}
where ${\rm Ru}(\partial X)$ is the Ruelle invariant of $\partial X$ with respect to the contact 1-form $\lambda$ above (for more details, see \cite{hut_ruelle}) \zjnote{and ${\rm Vol}(\partial X)$ is the contact volume computed from the volume form $\lambda \wedge d\lambda$.} Then the criterion from \cite{CD-20} says that there exist constants $0< c\leq C$ such that if $X$ is symplectically convex, then one has the following numerical constraint, 
\begin{equation} \label{CE-ct}
c\leq c_{\rm CE}(X) \leq C.
\end{equation}

The following result shows that the Ruelle invariant based criterion (\ref{CE-ct}) and $d_c$ provide two \djnote{different} perspectives to study \djnote{symplectic non-convexity}. 
\begin{thm} \label{thm-large-ratio} There exist sequences of \djnote{star-shaped} toric domains $\{X_{\Omega_1^k}\}$ and $\{X_{\Omega_2^k}\}$ \jznote{in $\R^4$} such that 
\jznote{\[ \begin{aligned}\lim_{k\to \infty}\frac{c_{\rm CE}(X_{\Omega_1^k}) / c_{\rm CE}(X_{\Omega_1^1})}{d_c\left(X_{\Omega_1^k}, X_{\Omega_1^1}\right)} &=0,\\
\lim_{k\to \infty}\frac{c_{\rm CE}(X_{\Omega_2^k}) / c_{\rm CE}(X_{\Omega_2^1})}{d_c\left(X_{\Omega_2^k}, X_{\Omega_2^1}\right)}&=\infty.\end{aligned}\]}
\end{thm}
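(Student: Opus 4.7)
The plan is to exhibit two families of star-shaped toric domains in $\R^4$ that decouple the Ruelle-type quantity $c_{\rm CE}$ from the coarse distance $d_c$ in opposite directions.

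\textbf{First family.} Set $X_{\Omega_1^k}:=X_{\Omega_{1/k}}$ from (\ref{Xp}) for $k\ge 1$. By Theorem \ref{ECH-est}, $d_c(X_{\Omega_{1/k}},\mathcal E_4)\to\infty$, and combined with $d_c(X_{\Omega_1^1},\mathcal E_4)<\infty$ and the triangle inequality this forces $d_c(X_{\Omega_1^k},X_{\Omega_1^1})\to\infty$. For the first limit it then suffices to bound the growth of $c_{\rm CE}(X_{\Omega_{1/k}})$ strictly below that of the lower bound (\ref{eq:ineq}). All three ingredients in (\ref{dfn-CE}) admit explicit expressions for this monotone toric domain: the contact volume equals $2\cdot\operatorname{Area}(\Omega_{1/k})$, the minimum Reeb orbit period is realized by the two axis orbits at the intercepts of the smooth defining curve $x^{1/k}+y^{1/k}=1$, and ${\rm Ru}(\partial X_{\Omega_{1/k}})$ is given by an integral of the linearized Reeb rotation along that curve. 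A direct quantitative estimate of these three quantities produces a bound on $c_{\rm CE}(X_{\Omega_1^k})$ which is $o(d_c(X_{\Omega_1^k},X_{\Omega_1^1}))$, yielding the first limit.

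\textbf{Second family.} I would construct a sequence $\{X_{\Omega_2^k}\}$ whose Ruelle invariant diverges while the $d_c$-distance to a fixed reference $X_{\Omega_2^1}$ remains bounded. The key observation is that $d_c$ only detects symplectic embedding information and is insensitive to Hausdorff-small perturbations of the moment image, whereas $c_{\rm CE}$ is sensitive to the total rotation of the linearized Reeb flow on $\partial X$. Concretely, one perturbs a fixed moment image $\Omega_2^1$ by adding $k$ small radial fingers (or high-frequency oscillations of shrinking amplitude) to $\partial_+\Omega_2^1$, arranged so that $\lambda^{-1}\Omega_2^1\subset\Omega_2^k\subset\lambda\Omega_2^1$ for a constant $\lambda$ independent of $k$. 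This containment produces a uniform upper bound on $d_c(X_{\Omega_2^k},X_{\Omega_2^1})$ via the definition (\ref{dfn-dc}), while each added finger contributes a definite amount to ${\rm Ru}(\partial X_{\Omega_2^k})$ and the minimum Reeb period and contact volume are kept controlled by tuning the finger widths and depths. This gives $c_{\rm CE}(X_{\Omega_2^k})\to\infty$ with $d_c(X_{\Omega_2^k},X_{\Omega_2^1})$ bounded, hence the second limit.

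\textbf{Main obstacle.} The delicate step is the analysis of the second family: one must simultaneously verify that the perturbed moment image remains star-shaped with smooth toric boundary, that the added fingers genuinely accumulate (rather than cancel) in ${\rm Ru}$, and that no spurious short Reeb orbits appear that would collapse the $T_{\min}$ factor in (\ref{dfn-CE}). Computing the Ruelle invariant for non-monotone toric hypersurfaces requires careful tracking of rotation numbers of the linearized Reeb flow along each piece of $\partial_+\Omega_2^k$, and balancing this against the containment needed for the uniform $d_c$-bound is the subtle point. For the first family the calculation is more routine because $\Omega_{1/k}$ is smooth and monotone, but even there one still needs careful quantitative bookkeeping to compare the growth of $c_{\rm CE}(X_{\Omega_{1/k}})$ with the lower bound (\ref{eq:ineq}).
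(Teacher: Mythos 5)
Your first family does not work, and the step you leave unverified is exactly where it breaks. You claim the minimum Reeb period on $\partial X_{\Omega_{1/k}}$ is realized by the two axis orbits; it is not. The simple orbits in the Lagrangian torus over the diagonal point $\left(2^{-k},2^{-k}\right)$ of the curve $x^{1/k}+y^{1/k}=1$, where the tangent line is $x+y=2\cdot 2^{-k}$, have period $2\cdot 2^{-k}$, exponentially small. Meanwhile the contact volume is $2\,\mathrm{vol}(X_{\Omega_{1/k}})=2\,\Gamma(k+1)^2/\Gamma(2k+1)=2\binom{2k}{k}^{-1}\sim 2\sqrt{\pi k}\,4^{-k}$, which decays much faster, and the Ruelle invariant of such a toric domain is governed by the axis intercepts (for concave toric domains it equals their sum, here $1+1=2$, by the computation in \cite{hut_ruelle}), hence stays bounded. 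Therefore $c_{\rm CE}(X_{\Omega_{1/k}})\sim 2^{k+1}/\sqrt{\pi k}\to\infty$ exponentially, whereas $d_c\left(X_{\Omega_{1/k}},X_{\Omega_1^1}\right)$ grows at most linearly in $k$ (from $B(2^{1-k})\subset X_{\Omega_{1/k}}\subset B(2)$) and at least logarithmically (Theorem \ref{ECH-est}). Your first ratio therefore tends to $+\infty$, not $0$. The paper gets the limit $0$ the opposite way: it fixes the $d_c$-scale and kills the numerator, taking strangulated balls $B_\delta$ (a thin wedge removed near the diagonal) for which $d_c(B(1),B_\delta)\to\log\sqrt{2}$ via Usher's truncated-ellipsoid estimate, while $c_{\rm CE}(B_\delta)\to 0$ because the pinch creates a short orbit while ${\rm Ru}$ and the volume stay bounded.

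For the second family your strategy (uniformly bounded $d_c$, divergent $c_{\rm CE}$) is the right one and is what the paper does, but you construct nothing and explicitly defer every substantive verification. Worse, the mechanism you propose is dubious: Hausdorff-small fingers or high-frequency oscillations of shrinking amplitude that keep the moment image pinched between $\lambda^{-1}\Omega_2^1$ and $\lambda\Omega_2^1$ do not obviously make ${\rm Ru}$ diverge, since for toric domains the Ruelle invariant is controlled by the axis intercepts rather than by the oscillation of $\partial_+\Omega$. The paper instead grafts long thin tails onto $B(99)$ reaching out to $(k,0)$ and $(0,k)$ with fixed added area, so the intercepts and hence $c_{\rm CE}(X_k)$ tend to infinity (by the computation in \cite{DGZ}); the genuinely nontrivial step, which any version of your construction would also need, is proving the uniform upper bound on $d_c$, done there by an explicit ball-packing argument showing $X_k\hookrightarrow B(100)$ via the weight decomposition and affine manipulations of the tail triangles. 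As written, both halves of your proposal have gaps, and the first half is headed toward the wrong limit.
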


\begin{rmk} Since $d_c$ reflects the changes of symplectic capacities, Theorem \ref{thm-large-ratio} also implies that $c_{\rm CE}$ and symplectic capacities are independent to each other in general. It would be interesting to explore other numerical characterizations of toric domains (cf.~recent work from Hutchings \cite{Hutchings_private}). \end{rmk}

\zjrnote{\begin{rmk} [Informed by Oliver Edtmair] The non-toric examples constructed in \cite{CD-20} with small and large $c_{\rm CE}$ in fact lie in arbitrarily small neighborhoods of the round ball with respect to the coarse distance $d_c$. This shows another evidence of the independence between $c_{\rm CE}$ and $d_c$.  \end{rmk}}

\noindent{\bf Acknowledgement}. \jznote{The second author was partially supported by the ANR LabEx CIMI (grant ANR-11-LABX-0040) within the French State Programme ``Investissements d’Avenir''.  The first and second author were partially supported by the ANR COSY (ANR-21-CE40-0002) grant.} The third author was partially supported by NSF grant DMS-1926686, FAPERJ
grant JCNE E-26/201.399/2021 and a Serrapilheira Institute grant. The fourth author was supported by USTC Research Funds of the Double First-Class Initiative. We are grateful for communications with Michael Hutchings, particularly concerning the second family of examples appearing in Theorem \ref{thm-large-ratio}. We also thank the hospitality of the conference - Persistence Homology in Symplectic and Contact Topology - held in Albi, France and organized by the second author, which provides an opportunity to a close collaboration between authors. \zjrnote{Finally, we are grateful for Oliver Edtmair's crucial comments on the first version of this paper. }

\section{\jznote{Symplectic John's ellipsoid theorem}} \label{sec-main} 
The classical John's ellipsoid theorem \cite{John} says that convex domains in $\R^n$ are close to ellipsoids. Explicitly, denote by ${\rm Conv}(\R^n)$ the convex domains in $\R^n$. A quantitative comparison between any two convex domains inside ${\rm Conv}(\R^n)$ is given by the \zjnote{so-called} Banach-Mazur distance \cite{Rud}, 
\begin{equation} \label{dfn-bm}
d_{\rm BM}(U,V) : = \inf \left\{ \log \lambda \geq 0 \,\bigg| \, \begin{array}{ll} \mbox{$\exists A \in {\rm GL}(n)$, $u, v \in \R^n$ such that} \\ \frac{1}{\lambda}(U + u) \subset A(V + v) \subset \lambda(U + u) \end{array} \right\}
\end{equation}
where $\cdot +u$ and $\cdot +v$ denote translations while $\frac{1}{\lambda} \cdot$ and $\lambda \cdot$ stand for dilations. It is easily verified that $d_{\rm BM}$ defines a pseudo-metric on ${\rm Conv}(\R^n)$ and $d_{\rm BM}(U,V) = 0$ if and only if $U$ is an affine transformation of $V$.

Since $d_{\rm BM}$ is defined up to affine transformations \zjnote{in $\R^n$}, consider ${\rm Ell}(n): = \{\mbox{ellipsoids in $\R^n$}\}$. Then John's ellipsoid theorem \cite{John} says that for any $U \in {\rm Conv}(n)$, we have
\begin{equation} \label{thm-john}
d_{\rm BM} (U, {\rm Ell}(n)) : = \inf_{E \in {\rm Ell}(n)} \, d_{\rm BM}(U, E) \leq \frac{1}{2}\log n
\end{equation}
in particular, finite.

Recall that $\sigma = \sigma_{d_c}$, defined by (\ref{sigma}), is a symplectic version of the Banach-Mazur distance with respect to \zjnote{the coarse distance} $d_c$. Recall also that $\mathcal E_4$ consists of all the symplectic ellipsoids in $\R^4$. 


\begin{prop}[Symplectic John's ellipsoid theorem] \label{prop-symp-john} \jznote{$\sigma(\mathcal C_4, \mathcal E_4) \leq \log 2$}. 
\end{prop}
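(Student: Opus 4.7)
The plan is to take a symplectic square root of the classical John ellipsoid theorem: John's theorem in $\R^4$ sandwiches any convex body between a concentric pair of ellipsoids differing by a factor $n=4$, and I will turn this into a factor $\sqrt{4}=2$ sandwich in $d_c$.

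First I would establish that the coarse distance $d_c$ is invariant under symplectomorphisms applied to either argument, i.e.\ $d_c(\phi(U),V)=d_c(U,\phi(V))=d_c(U,V)$ for every $\phi\in\mathrm{Symp}(\R^4)$. The subtle point is that the dilation $x\mapsto\tfrac{1}{\lambda}x$ does not commute with a nonlinear $\phi$, so the check requires the conjugate $\Psi_\lambda(x):=\lambda^{-1}\phi(\lambda x)$, which is again a symplectomorphism of $\R^4$ (a one-line computation on $\Psi_\lambda^{\ast}\omega_{\mathrm{std}}$) and satisfies $\Psi_\lambda(\tfrac{1}{\lambda}U)=\tfrac{1}{\lambda}\phi(U)$. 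This identifies the sets appearing on the two sides of $\hookrightarrow$ in (\ref{dfn-dc}) and yields the invariance. In particular, for $U=\phi(K)\in\mathcal C_4$ with $K$ convex, it suffices to produce $E\in\mathcal E_4$ with $d_c(K,E)\leq\log 2$.

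Next, for such a convex $K$, John's theorem supplies an ellipsoid $E_J$ centered at some point $c$ with $E_J\subset K\subset c+4(E_J-c)$, the factor $4$ being the ambient dimension. I would then let $\psi$ be the composition of the translation $x\mapsto x-c$ with the linear symplectic transformation provided by Williamson's normal form applied to the positive-definite quadratic form defining $E_J-c$; then $\psi$ is an affine symplectomorphism sending $E_J$ to a standard symplectic ellipsoid $\tilde E\in\mathcal E_4$ centered at the origin, and one checks directly that $\psi(c+4(E_J-c))=4\tilde E$. Writing $K':=\psi(K)$ and $E_{\ast}:=2\tilde E\in\mathcal E_4$, the John inclusions become
\[ \tfrac{1}{2}K'\subset E_{\ast}\subset 2K', \]
and these are symplectic embeddings realized by the identity map of $\R^4$, whence $d_c(K',E_{\ast})\leq\log 2$. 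Combining with the invariance above gives $d_c(U,E_{\ast})=d_c(K,E_{\ast})=d_c(K',E_{\ast})\leq\log 2$, and taking the infimum over $E\in\mathcal E_4$ and then the supremum over $U\in\mathcal C_4$ produces $\sigma(\mathcal C_4,\mathcal E_4)\leq\log 2$.

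The main obstacle I expect is pinning down the symplectomorphism-invariance of $d_c$ cleanly: because $d_c$ involves the dilations $\tfrac{1}{\lambda}U$ and $\lambda U$, the equality $d_c(\phi(U),V)=d_c(U,V)$ does not follow by naive substitution and genuinely requires the conjugation-by-dilation trick above. Once this invariance is available, the rest is forced: John's theorem in dimension four gives the factor $n=4$, inserting the intermediate ellipsoid $E_{\ast}=2\tilde E$ takes a literal square root, and the two John inclusions become the two halves of the $d_c$ sandwich with constant $\log 2$.
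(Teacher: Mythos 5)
Your argument is correct and follows essentially the same route as the paper: classical John's theorem with ratio $n=4$, Williamson's normal form to make the John ellipsoid a standard symplectic ellipsoid, and a rescaling that converts the ratio-$4$ inclusion into a symmetric factor-$2$ sandwich realizing $d_c\leq\log 2$. The only difference is presentational: you make explicit the symplectomorphism-invariance of $d_c$ via the conjugated dilation $\Psi_\lambda(x)=\lambda^{-1}\phi(\lambda x)$, a point the paper absorbs into the phrase ``up to symplectomorphism.''
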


\begin{proof} Up to symplectomorphism, let $U \in \mathcal C_4$ be a convex domain of \zjnote{$\mathbb{R}^{4}$}, by the classical John's ellipsoid theorem (\ref{thm-john}), there exists an ellipsoid \zjnote{$E \subset \R^{4}$} such that 
\begin{equation} \label{John}
E \subset U \subset o + 4 \cdot (E-o)
\end{equation}
where $o$ is the center of $E$. Note that this $E$ is not necessarily a symplectic ellipsoid. However, by Williamson’s theorem on standard forms for symplectic ellipsoids, there exists a (linear) symplectomorphism $\phi \in \zjnote{{\rm Sp}(4)}$ such that $\phi(E) \in \mathcal E_4$. Without loss of generality, let us assume that $\phi(E) = E(a,b)$ \jznote{which is defined by 
\begin{equation}\label{ellispoid}
     E(a,b) :=\left\{(z_1, z_2) \in \C^2 \,\bigg| \, \frac{\pi |z_1|^2}{a} + \frac{\pi |z_2|^2}{b} \leq 1\right\}
\end{equation}
for some $0 <a \leq b$}. Then, since the shifting (by $o$) is also a symplectomorphism, \jznote{the relation} (\ref{John}) implies that $E(a,b)\, {\hookrightarrow} \,U \, {\hookrightarrow} \, 4E(a,b)$. By a rescaling, one gets  
\[ \frac{1}{2}E\left(4a, 4b\right) \,{\hookrightarrow} \,U\, {\hookrightarrow}\, 2 E\left(4a, 4b\right).\]
Therefore, \jznote{by the definition (\ref{dfn-dc})}, we have $d_c(U,\mathcal{E}_4)\leq \log 2$. \end{proof}

\begin{rmk} Let us clarify the notation of rescaling $\alpha E(a,b)$ for $\alpha \in \R_{>0}$ that appears in the proof of Proposition \ref{prop-symp-john}. Here is the definition, 
\[ \alpha E(a,b) := \left\{(\alpha z_1,\alpha z_2) \in \C^2 \,\bigg| \, \frac{\pi |z_1|^2}{a} + \frac{\pi |z_2|^2}{b} \leq 1\right\}\]
for $\alpha \in \R_{>0}$. In particular, we have an identification that $\alpha E(a,b) = E(\alpha^2 a, \alpha^2 b)$. This implies that $E(a,b) \hookrightarrow U$ if and only if $\frac{1}{\sqrt{\alpha}} E\left(\alpha a, \alpha b\right) \hookrightarrow U$. \end{rmk}

\begin{rmk} Proposition \ref{prop-symp-john} holds in any $2n$-dimensional case. More explicitly, we have $d_c(\mathcal C_{2n}, \mathcal E_{2n}) \leq \frac{1}{2} \log (2n)$.\end{rmk}

\begin{rmk} \label{rmk-evid-1} The monotonicity of $\sigma(-,-)$ implies that 
\jznote{\[ \sigma(\mathcal C_4, \mathcal M_4) \leq \log 2 \]}
since $\mathcal E_4 \subset \mathcal M_4$. However, for any symplectically convex domain $U \in \mathcal C_4$, it is not clear how to approximate $U$ via \jznote{{\rm non-ellipsoids}} domains $V \in \mathcal M_4$. Nevertheless, since $\mathcal M_4$ contains substantially more elements than $\mathcal E_4$, it is naturally to expect that the upper \djnote{bound} of \jznote{$\sigma(\mathcal C_4, \mathcal M_4)$} is much lower than $\log 2$. 

\jznote{Interestingly, this suggests a metric-geometrical approach to prove the strong Viterbo conjecture. Namely if $\sigma(\mathcal C_4, \mathcal M_4) =0$, then the strong Viterbo conjecture holds, due to \cite[Theorem 1.7]{HGR}. In fact, from the main result of a recent work \cite[Theorem 1.2]{C-GH23b}, the same conclusion holds \djnote{in higher dimensions} if $\sigma(\mathcal C_{2n}, \mathcal M_{2n}) =0$ \zjnote{for $n \geq 3$}.} 

\jznote{At last, let us emphasize that this metric-geometrical approach is sensitive to the distance \djnote{picked} to define $\sigma$. \zjnote{D}ue to communications with M.~Hutchings \cite{Hutchings_private}, if one replaces $d_c$ by a rather similar pseudo-metric called {\rm symplectic Banach-Mazur distance $d_{\rm SBM}$} that has been studied in \cite{GU,Usher-sbm,SZ}, then surprisingly $\sigma_{d_{\rm SBM}}(\mathcal C_4, \mathcal M_4)>0$.} 
\jgnote{This does not immediately \zjnote{disprove} the \zjnote{conjecture $\sigma(\mathcal C_4, \mathcal M_4) =0$} above as $\sigma_{d_{\rm SBM}}(-, -)\geq \sigma(\mathcal -,-)$.}\end{rmk}

\section{\jznote{Proof of Theorem \ref{ECH-est}}} 

Denoted by $c_k^{\rm ECH}$ the $k$-th ECH capacity for $k \in \N$. \jznote{For more background on ECH capacities, see \cite{Hut11}. Also, as explained in Section \ref{ssec-td}, it suffices to prove the first conclusion of Theorem \ref{ECH-est}.} \zjnote{For simplicity, the 4-dimensional volume ${\rm Vol}_{\R^4}(-)$ is denoted by ${\rm vol}(-)$.}

\begin{proof} 
Suppose that $E(a,b)\hookrightarrow X_{\Omega_p}\hookrightarrow \lambda E(a,b)$ for some $E(a,b)$ \zjnote{and $\lambda \geq 1$}.
We assume without loss of generality that $a\le b$. \djnote{Note that} $\Omega_p$ is a concave toric domain. For each $k \in \N$, let $x_k(p)$ denote the $x$-intercept of the line of slope $-\frac{1}{k}$, which is tangent to the curve $x^p+y^p=1$. It follows from a straight-forward computation that
\[x_k(p)=(1+k^{\frac{p}{p-1}})^{-\frac{1}{p}}+k(1+k^{\frac{p}{1-p}})^{-\frac{1}{p}}.\]
Consider a certain truncation of $\Omega_p$, defined by 
\[X_k(p)=\{(x,y)\in[0,\infty)^2\mid x^p+y^p\le 1\text{ and } \max(x,y)\le x_k(p)\}\]
which in shown in Figure \ref{fig_p_ball}. 
\begin{figure}[h]
\includegraphics[scale=0.95]{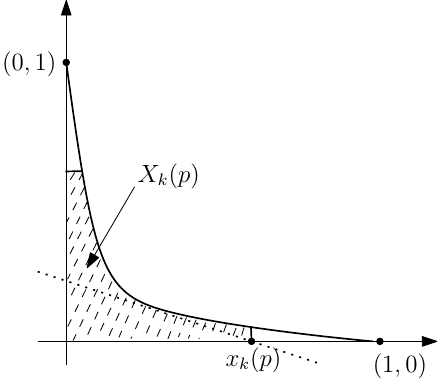}
\centering
\caption{\jznote{$X_k(p)$ for $p \in (0,1]$.}} \label{fig_p_ball}
\end{figure} 
Let $(w_1,w_2,\dots)$ be the weight decomposition of $\Omega_p$ (see \jznote{the corresponding} definition in \cite[Section 1.3]{Choi_2014}). Let $k=\lfloor b/a\rfloor$.
\begin{equation}\label{eq:1}
ka=c_k(E(a,b))\le c_k(X_{\Omega_p})=c_k\left(\bigsqcup_{i=1}^k B(w_i)\right).
\end{equation}
From \cite[Equation (3.9)]{hut_ruelle}, we obtain
\begin{equation}\label{eq:2}
c_k\left(\bigsqcup_{i=1}^k B(w_i)\right)\le 2\sqrt{k\cdot \text{vol}\left(\bigsqcup_{i=1}^k B(w_i)\right)}.
\end{equation}
It follows from the definition of the weight decomposition that $\bigsqcup_{i=1}^k B(w_i)\subset X_k(p).$
So
\begin{equation}
\begin{aligned}
\text{vol}\left(\bigsqcup_{i=1}^k B(w_i)\right)&\le\text{vol}(X_k(p))=2^{-\frac{2}{p}}+2\int_{2^{-\frac{1}{p}}}^{x_k(p)}(1-x^p)^{\frac{1}{p}}\,dx\\
&=2^{-\frac{2}{p}}\left(1+2\int_1^{2^{\frac{1}{p}}x_k(p)}(2-u^p)^{\frac{1}{p}}\,du\right).\label{eq:3}
\end{aligned}
\end{equation}
We note that the integrand in the last integral above increases as $p\to 0$ and it converges pointwise to $\frac{1}{u}$. Moreover it is readily verified that $2^{\frac{1}{p}}x_k(p)$ also increases as $p\to 0$ and it converges to $2\sqrt{k}$. So 
\begin{equation}\label{eq:4}
2\int_1^{2^{\frac{1}{p}}x_k(p)}(2-u^p)^{\frac{1}{p}}\,du\le 2\int_1^{2\sqrt{k}}\frac{1}{u}\,du=2\log\left(2\sqrt{k}\right)=\log 4+\log k.
\end{equation}
Combining \eqref{eq:1}, \eqref{eq:2}, \eqref{eq:3} and \eqref{eq:4}, we obtain $ka\le 2\cdot 2^{-\frac{1}{p}}\sqrt{k(1+\log 4+\log k)}$.
So
\begin{equation}\label{eq:5}
a\le 2\cdot 2^{-\frac{1}{p}}\sqrt{\frac{1+\log 4+\log k}{k}}.
\end{equation}
Since $X_{\Omega_p}\hookrightarrow\lambda E(a,b)$, we have
\begin{equation}\label{eq:6}
2\cdot 2^{-\frac{1}{p}}=c^{\rm ECH}_1(X_{{\Omega}_p})\le \lambda^2 c^{\rm ECH}_1(E(a,b))=\lambda^2 a.
\end{equation}
It follows from \eqref{eq:5} and \eqref{eq:6} that $a\le \lambda^2 a\sqrt{\frac{1+\log 4+\log k}{k}}$.
So
\begin{equation}\label{eq:7}
\lambda^4\ge \frac{k}{1+\log 4+\log k}.
\end{equation}
Note that this estimation is not enough to deduce the requested large $d_c$-distance conclusion since when $p$ changes, the constant $k$ (so that the corresponding ellipsoid $E(a,b)$ with $k=\lfloor b/a\rfloor$ that embeds into $X_{\Omega_p}$) may change. 
  
Now, from \eqref{eq:5}, we also obtain the following estimations, 
\begin{equation*}
\begin{aligned}
a&\le \sqrt{\frac{(1+\log 4+\log k)\text{vol}(X_{\Omega_p})}{k\cdot g(p)}}\\&\le \sqrt{\frac{(1+\log 4+\log k)\text{vol}(\lambda E(a,b))}{k\cdot g(p)}}\\
&=\sqrt{\frac{(1+\log 4+\log k)\lambda^4 ab}{2k\cdot g(p)}}\\
&\le a\sqrt{\frac{(1+\log 4+\log k)\lambda^4 (k+1)}{2k\cdot g(p)}}\\
&\le a\sqrt{\frac{(1+\log 4+\log k)\lambda^4}{g(p)}}
\end{aligned}
\end{equation*}
where the final step comes from the estimation $\frac{k+1}{2k} \leq 1$ for all $k \in \N$. So, we get 
\begin{equation}\label{eq:8}
\lambda^4\ge \frac{g(p)}{1+\log 4+\log k}.
\end{equation}
From \eqref{eq:7} and \eqref{eq:8} it follows that
\begin{equation}\label{eq:9}
\lambda^4\ge \frac{\max(g(p),k)}{1+\log 4+\log k}\zjnote{.}
\end{equation}
From a change of variables we obtain
\begin{equation*}
\begin{aligned}
g(p) = 2^{\frac{2}{p}-2}\text{vol}(X_{\Omega_p}) &=2^{\frac{2}{p}-2}\int_0^1(1-x^p)^{\frac{1}{p}}\,dx\\
&=\frac{1}{4}\int_0^{2^{\frac{1}{p}}}(2-u^p)^{\frac{1}{p}}\,du.
\end{aligned}
\end{equation*}
One observes that $\lim_{p\to 0}(2-u^p)^{\frac{1}{p}}=\frac{1}{u}$. Therefore, $g(p)$ increases as $p\to 0$ and $\lim_{p\to 0}g(p)=\infty$. Moreover, \[g\left(\frac{1}{5}\right)=\frac{\Gamma(1+5)^2}{\Gamma(1+2\cdot 5)}\cdot 2^{2\cdot 5-2}=\frac{5!^2}{10!}\cdot 2^{8}=\frac{64}{63}>1.\] So for $p<\frac{1}{5}$, it follows that $\lfloor g(p)\rfloor\ge 1$. In this case,
\djnote{
\begin{equation*}
\begin{aligned}
\min_k \frac{\max(g(p),k)}{1+\log 4+\log k}&=\min\left(\frac{g(p)}{1+\log 4+\log\lfloor g(p)\rfloor},\frac{\lfloor g(p)\rfloor +1}{1+\log 4+\log(\lfloor g(p)\rfloor+1)}\right)\\
& \ge \frac{g(p)}{1+\log 4+\log g(p)}.
\end{aligned}
\end{equation*}
}
Taking the infimum of over all triples $(\lambda,a,b)$, we obtain

\djnote{$$f(p)^4\ge \frac{g(p)}{1+\log 4+\log g(p)} $$
where $f(p)=\inf\{\lambda\ge 1\mid E(a,b)\hookrightarrow X_{\Omega_p}\hookrightarrow \lambda E(a,b)\text{ for some }a,b>0\}$.
Then one can show that $f(p)$ is linked to the coarse distance \zjnote{$d_c$} by 
$$d_c(X_{\Omega_p},\mathcal{E}_4)=\frac{1}{2} \log f(p)$$
}
thus we obtain \eqref{eq:ineq}, proving the desired conclusion. \end{proof}





Here, we \jznote{point out} that if we ``linearize'' the profile curve in $\Omega_p$ of $X_{\Omega_p}$ considered in the proof of Theorem \ref{ECH-est} above, denoted by $\Omega_p^{\rm lin}$ and shown in Figure \ref{fig_linear_p}, then following conclusion holds. 
\begin{figure}[h]
\includegraphics[scale=1]{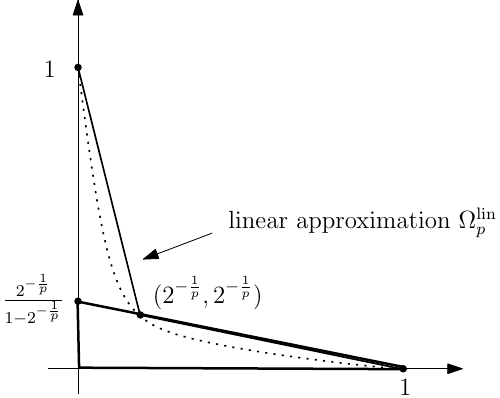}
\centering
\caption{A linear approximation of $X_{\Omega_p}$ from $\Omega_p^{\rm lin}$.} \label{fig_linear_p}
\end{figure}

\begin{prop} \label{prop-linear-approx} For any $p \in (0,1]$, we have $d_c\left(X_{\Omega_p^{\rm lin}}, \mathcal C_4\right) \leq \frac{1}{2} \log 3$. \end{prop}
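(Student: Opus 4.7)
The plan is to find an ellipsoid $V\subset\R^4$ (hence convex) and a scaling factor $\sqrt 3$ such that both symplectic embeddings $X_{\Omega_p^{\mathrm{lin}}}\hookrightarrow\sqrt 3\,V$ and $\tfrac{1}{\sqrt 3}V\hookrightarrow X_{\Omega_p^{\mathrm{lin}}}$ hold, which by the definition of $d_c$ yields $d_c(X_{\Omega_p^{\mathrm{lin}}},V)\le\tfrac12\log 3$ and hence the proposition, since $V\in\mathcal E_4\subset\mathcal C_4$. Reading from Figure~\ref{fig_linear_p}, I take $\Omega_p^{\mathrm{lin}}$ to be the chevron polygon with vertices $(0,0),(1,0),(r,r),(0,1)$, where $r=2^{-1/p}$, obtained by replacing the curve $x^p+y^p=1$ by the chord polyline through its endpoints and diagonal midpoint. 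The natural choice of $V$, matching the aspect ratio of the waist of the chevron, is $V=E(1,r)$.

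The embedding $\tfrac{1}{\sqrt 3}V=E(1/3,r/3)\hookrightarrow X_{\Omega_p^{\mathrm{lin}}}$ is a toric moment-image inclusion that I would verify by a direct algebraic check: the triangle $\{rx+y\le r/3,\ x,y\ge 0\}$ sits inside $\Omega_p^{\mathrm{lin}}$, as one sees by splitting along the diagonal $y=x$ and confirming, in each half, that the corresponding edge constraint of the chevron ($rx+(1-r)y\le r$ below the diagonal, or $(1-r)x+ry\le r$ above it) holds with margin.

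The hard step is the embedding $X_{\Omega_p^{\mathrm{lin}}}\hookrightarrow\sqrt 3\,V=E(3,3r)$: the direct moment-image inclusion fails for $r<1/3$ because the tip $(0,1)\in\Omega_p^{\mathrm{lin}}$ lies outside the triangle of $E(3,3r)$, so a genuinely non-toric symplectic embedding is required. My plan is to exploit that $\Omega_p^{\mathrm{lin}}$ is a concave toric domain in the sense of \cite{HGR} (its upper-right profile being piecewise-linear, decreasing, and convex as a function) and compute its weight decomposition: the first weight is $w_1=2r$, from the inscribed triangle $\{x+y\le 2r\}\subset\Omega_p^{\mathrm{lin}}$, and the two residual symmetric arms are each $SL(2,\Z)$-equivalent to the ellipsoid $E(1-2r,r)$, whose further weights are all $\le r$. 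The weight formula for ECH capacities of a concave toric domain then reduces the embedding problem to the family of inequalities $c_k^{\mathrm{ECH}}(X_{\Omega_p^{\mathrm{lin}}})\le c_k^{\mathrm{ECH}}(E(3,3r))$ for every $k\in\N$, after which a standard ECH-embedding theorem for concave toric domains into ellipsoids supplies the embedding. The inequality is tightest at $k=1$, where it reads $2r/3r=2/3<1$, and it improves asymptotically to $\sqrt 2/3<1$ by Weyl's law on ECH capacities. The main obstacle is a careful combinatorial verification at intermediate $k$ --- controlling the optimal composition of the weights $(2r,r,r,\dots)$ against the ellipsoid capacity sequence $\{3m+3rn\}_{m,n\ge 0}$ uniformly in $p\in(0,1]$ --- with the factor $3$ selected precisely so this margin persists throughout.
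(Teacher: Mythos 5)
Your overall strategy --- exhibiting a single ellipsoid $V$ with $\frac{1}{\sqrt 3}V\hookrightarrow X_{\Omega_p^{\rm lin}}\hookrightarrow \sqrt 3\,V$ --- is the right one, and your reading of $\Omega_p^{\rm lin}$ as the chevron with vertices $(0,0),(1,0),(r,r),(0,1)$, where $r=2^{-1/p}$, matches the paper; so does your weight decomposition $\left(2r;\,E(1-2r,r);\,E(1-2r,r)\right)$, and the inclusion $E(1/3,r/3)\subset X_{\Omega_p^{\rm lin}}$ checks out. The problem is that the proof is not complete: the entire content of the hard direction $X_{\Omega_p^{\rm lin}}\hookrightarrow E(3,3r)$ is the verification of $c_k^{\rm ECH}(X_{\Omega_p^{\rm lin}})\le c_k^{\rm ECH}(E(3,3r))$ for \emph{every} $k$, uniformly in $p\in(0,1]$, and you explicitly defer exactly that step. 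The dangerous range is $1\le k\lesssim 1/r$, where the weight sequence is essentially $(2r,r,\dots,r)$ with about $2\lfloor (1-2r)/r\rfloor$ copies of $r$, so that $c_k$ of the chevron grows like $(k+1)r$ while $c_k(E(3,3r))=3rk$; this works out, but until that combinatorial comparison (and the crossover into the Weyl regime) is actually carried out, the proposition is not proved. The plan is salvageable --- the target embedding is true and Cristofaro-Gardiner's concave-into-convex theorem does reduce it to these inequalities --- but as written it is a plan, not a proof.

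For comparison, the paper avoids ECH capacities altogether. It uses the larger ellipsoid $E\left(1,\frac{r}{1-r}\right)$, whose moment triangle has hypotenuse lying along the lower edge of the chevron, so it includes into $X_{\Omega_p^{\rm lin}}$ directly; for the other side it invokes Lemma 4.5 of \cite{HGR} to get $X_{\Omega_p^{\rm lin}}\hookrightarrow P(1,2r)$, and then $P(1,2r)\subset 3E\left(1,\frac{r}{1-r}\right)$ by inclusion of moment images. Note that $P(1,2r)\subset E(3,3r)$ as well, since the corner $(1,2r)$ satisfies $\frac{1}{3}+\frac{2r}{3r}=1$; so the same lemma would close your hard direction in one line and let you skip the ECH computation entirely.
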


\begin{proof} On the one hand, by inclusion, the ellipsoid $E(1, \frac{2^{-1/p}}{1-2^{-1/p}})$ shown by the bold edges in Figure \ref{fig_linear_p} embeds inside $X_{\Omega_p^{\rm lin}}$. On the other hand, by Lemma 4.5 in \cite{HGR}, $X_{\Omega_p^{\rm lin}} \hookrightarrow P(1, 2^{-1/p+1})$. Since $P(1, 2^{-1/p+1})\subset (3 - 2^{-1/p+1}) E(1, \frac{2^{-1/p}}{1-2^{-1/p}})$ trivially, one gets the following relation, 
    $$E\left(1, \frac{2^{-\frac{1}{p}}}{1-2^{-\frac{1}{p}}}\right)\hookrightarrow X_{\Omega_p^{\rm lin}} \hookrightarrow 3 E\left(1, \frac{2^{-\frac{1}{p}}}{1-2^{-\frac{1}{p}}}\right)$$
This implies that
    $$d_c\left(X_{\Omega_p^{\rm lin}} ,\mathcal{E}_4\right)\leq \frac{1}{2}\log 3$$
which completes the proof. \end{proof}

\begin{rmk} Here is \jznote{a} way to explain the essential difference between \jznote{Theorem \ref{prop-monotone-convex}} and Proposition \ref{prop-linear-approx}, by \jznote{simply} comparing \jgnote{the respective} volumes, where $$\frac{{\rm Vol}_{\R^{4}} (X_{\Omega_p^{\rm lin}})}{{\rm Vol_{\R^4}}(X_{\Omega_p})} \to +\infty$$ as $p \to 0$. \end{rmk}

\section{\jznote{Proof of Theorem \ref{thm-large-ratio}}} 

\begin{proof} The proof is given by two different examples. 

\medskip

\noindent {\bf Example One}: Let us consider the strangulation operation on $B^4(1)$ or for brevity $B(1)$. By definition, it cuts off a part of $B^4(1)$, which is, on the level of the moment image, a thin triangle symmetric to the diagonal $y = x$. See Figure \ref{fig_strangulation}, where $\ep(\delta)$ is proportional to $\delta$, 
\begin{figure}[h]
\includegraphics[scale=0.8]{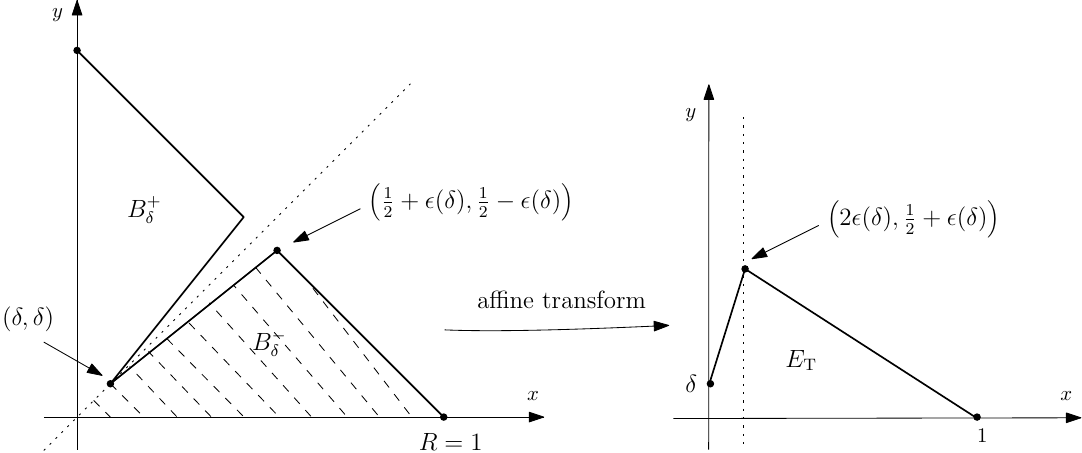}
\centering
\caption{A strangulation on $B(1)$, resulting a toric domain $B_{\delta}$.} \label{fig_strangulation}
\end{figure} 
Denote the resulting toric domain by $B_\delta$. Then obviously we have $B_\delta \hookrightarrow B(1)$ simply by inclusion. Now, cut through the diagonal and taking closures, we obtain two toric domains $B_\delta^-$ and $B_\delta^+$ (which are in fact symplectomorphic to each other by symmetry). Apply the affine transformation
\[ \begin{pmatrix}
1 & -1 \\
1 & 0 
\end{pmatrix},\] 
then the moment image of $B_{\delta}^-$ transfers to a region $E_{\rm T}$ where the corresponding toric domain (still denoted by) $E_{\rm T}$ is an example of so-called {\it truncated ellipsoid} defined in \cite{Usher-sbm}. More explicitly, by an elementary calculation, it is a truncated ellipsoid from $E\left(1, \frac{1 + 2\ep(\delta)}{2-4\ep(\delta)}\right)$, with truncated parameters $\delta$ and $\frac{\frac{1}{2} + \ep(\delta) - \delta}{2\ep(\delta)}$. These parameters are denoted by $\ep$ and $\beta$ respectively in \cite{Usher-sbm}. For our purpose, let use denote \begin{equation} \label{beta}
\beta(\delta) := \frac{\frac{1}{2} + \ep(\delta) - \delta}{2\ep(\delta)}.
\end{equation}
Since affine transformations on moment images induce symplectomorphisms on the corresponding toric domains, we know that 
\begin{equation} \label{affine}
(E_{\rm T})^- \sqcup (E_{\rm T})^+ \simeq B_{\delta}^- \sqcup B_{\delta}^+ \hookrightarrow B_\delta
\end{equation}
where $(E_{\rm T})^-$ and $(E_{\rm T})^+$ stand for two copies of the truncated ellipsoid above. One of the more striking properties of truncated ellipsoids is Corollary 5.3 in \cite{Usher-sbm}, which proves that 
\begin{equation} \label{Usher-small}
d_c\left(E_{\rm T}^-, E\left(1, \frac{1 + 2\ep(\delta)}{2-4\ep(\delta)}\right)\right) \leq 2 \log \left(\frac{1+\beta(\delta)}{\beta(\delta)}\right) : = \log C(\delta).
\end{equation}
Hence, with all the notations above, we have 
\begin{align*}
\frac{1}{C(\delta)} B\left(\frac{1 + 2\ep(\delta)}{2-4\ep(\delta)} \right) & \hookrightarrow \frac{1}{C(\delta)} B\left(\frac{1 + 2\ep(\delta)}{2-4\ep(\delta)} \right) \sqcup  \frac{1}{C(\delta)} B\left(\frac{1 + 2\ep(\delta)}{2-4\ep(\delta)} \right) \\
& \hookrightarrow \frac{1}{C(\delta)} E\left(1, \frac{1 + 2\ep(\delta)}{2-4\ep(\delta)}\right) \sqcup \frac{1}{C(\delta)} E\left(1, \frac{1 + 2\ep(\delta)}{2-4\ep(\delta)}\right)\\
& \hookrightarrow  (E_{\rm T})^- \sqcup  (E_{\rm T})^+ \hookrightarrow B_{\delta}
\end{align*}
where the first embedding is just the inclusion; the second embedding comes from the Gromov width of an ellipsoid; the third embedding comes from (\ref{Usher-small}); the last embedding is just (\ref{affine}). To sum up, we obtain the following embedding relations, 
\[ B\left(\frac{1}{C(\delta)^2} \cdot \frac{1 + 2\ep(\delta)}{2-4\ep(\delta)} \right) \hookrightarrow B_{\delta} \hookrightarrow B(1). \]
Then when $\delta \to 0$, we have 
\[ \ep(\delta) \to 0, \,\,\mbox{so} \,\, \frac{1 + 2\ep(\delta)}{2-4\ep(\delta)} \to \frac{1}{2}; \,\,\,\, \beta(\delta)  \to +\infty, \,\, \mbox{so} \,\, C(\delta) \to 1. \]
Therefore, $d_c(B(1), B_{\delta}) \to \log\sqrt{2}$ as $\delta \to 0$. In particular, $B(1)$ and $B_{\delta}$ do not differ much in terms of any symplectic capacity. \jznote{However, by the argument in \cite[Section 3.1]{DGZ}, we know $c_{\rm CE}(B_{\delta}) \to 0$ as $\delta \to 0$.}

\medskip

\noindent{{\bf Example Two}}: Let us consider the strain operation on $B^4(99)$ or for brevity $B(99)$. By definition, we add two small ``tail'' triangles along each axis on the moment image of $B(99)$. See Figure \ref{fig_strain}, \zjnote{where small triangles are shaded, they are symmetric with respect to $y = x$, and the horizontal one has height $1$ and base $99$}. Denote the resulting domain by $X$. 
\begin{figure}[h]
\includegraphics[scale=0.8]{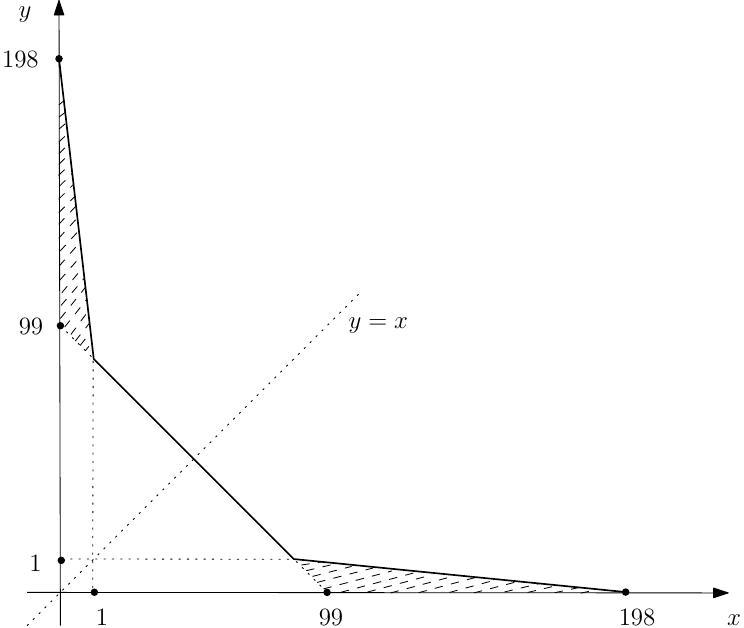}
\centering
\caption{A strain on $B(99)$, resulting a toric domain $X$.} \label{fig_strain}
\end{figure} 
Note that 
\[ {\rm Vol}_{\R^4}(X) = \frac{99^2}{2} + 2 \cdot \frac{99}{2} < \frac{100^2}{2} = {\rm Vol}_{\R^4}(B(100)). \]
Then obviously we have $B(99) \hookrightarrow X$. Now, we claim that $X \hookrightarrow B(100)$. \jznote{Since $X$ is concave toric, $B(100)$ is convex toric, and the weight sequence of $X$ is 
\[ (w_1, w_2, \cdots) = (99,  \underbrace{1, \cdots, 1}_\text{$198$-many} ) \]
such required embedding can be transferred to a ball-packing problem by \cite[Theorem 2.1]{Dan} . In fact, we claim that 
\[ B(99) \cup \underbrace{B(1) \cup \cdots \cup B(1)}_\text{$199$-many} \hookrightarrow B(100). \]
Indeed, the shaded tail triangles in $X$ (for instance, the horizontal one) can be divided into small triangles where each is equivalent to the triangle with vertices $(98,0), (99, 0), (98,1)$, up to affine transformation. For an illustration, see the left picture in Figure \ref{fig_packing}.  
\begin{figure}[h]
\includegraphics[scale=0.9]{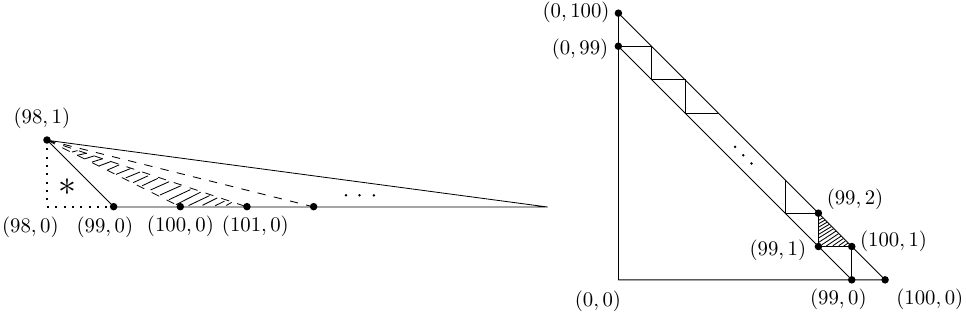}
\centering
\caption{Packing the tail triangles of $X$ into $B(100)$.} \label{fig_packing}
\end{figure} 
Then by further affine transformations, we can move the ``start'' triangle into $B(100) \backslash B(99)$, on the level of their moment image denoted by $\Delta(100) \backslash \Delta(99)$. The right picture in Figure \ref{fig_packing} shows how the small triangle, shaded one with vertices $(100, 0), (101, 0), (98,1)$, is placed \zjnote{inside} $\Delta(100) \backslash \Delta(99)$, precisely located at the shaded small triangle with vertices $(99,1), (100,1), (99,2)$. Since 
\[ {\rm Area}_{\R^2}(\Delta(100) \backslash \Delta(99)) = \frac{100^2}{2} - \frac{99^2}{2} = \frac{199}{2} = 199 \cdot {\rm Area}_{\R^2}(\mbox{small $\Delta$}),\]
such procedure can be inductively carried out and pack $199$-many small $\Delta$ into $\Delta(100) \backslash \Delta(99)$. This corresponds to a packing of $199$-many $B(1)$ into $B(100)\backslash B(99)$. Thus we \zjnote{obtain} the desired claim. }

\jznote{Note that the same argument works identically if we replace the tail triangle in $X$ by one with the furthest vertex being $(k, 0)$ (but with volume fixed). Denote the resulting toric domain by $X_k$, then we complete the proof by the argument in \cite[Section 3.2]{DGZ} where $c_{\rm CE}(X_k) \to \infty$ when $k \to \infty$. }\end{proof}

 \bibliographystyle{amsplain}
\bibliography{CCT}
\noindent\\

\end{document}